\newtheorem{theorem}{Theorem}[section]
\newtheorem{lemma}[theorem]{Lemma}
\theoremstyle{definition}
\newtheorem{definition}[theorem]{Definition}
\theoremstyle{remark}
\numberwithin{equation}{section}
\newcommand\nutwid{\overset {\text{\lower 3pt\hbox{$\sim$}}}\nu}
\newcommand\stroke[2]{{#1}\,\left\arrowvert\,#2\right.}
\newcommand\MAT[4]{\begin{pmatrix} {#1} & {#2} \\ {#3}  & {#4} \end{pmatrix}}
\newcommand\omycite[1]{}
\newcommand{\beqs}{\begin{equation*}}
\newcommand{\eeqs}{\end{equation*}}
\newcommand{\beq}{\begin{equation}}
\newcommand{\eeq}{\end{equation}}
\renewcommand{\MR}[1]{\href{http://www.ams.org/mathscinet-getitem?mr={#1}}{MR{#1}}}
\begin{document}
\title[Congruences modulo powers of $7$ ]{Congruences Modulo Powers of 7 for the Reciprocal Crank Parity Function }


\author{Dandan Chen}
\address{Department of Mathematics, Shanghai University, People's Republic of China}
\address{Newtouch Center for Mathematics of Shanghai University, Shanghai, People's Republic of China}
\email{mathcdd@shu.edu.cn}



\subjclass[2010]{ 11P83, 05A17}

\date{}


\keywords{ Congruences; Modular forms; Partitions }

\begin{abstract}
Amdeberhan and Merca recently studied arithmetic properties of the sequence $a(n)$, the reciprocal of the crank parity function, which counts the number of integer partitions of weight $n$ whose even parts are monochromatic and whose odd parts may appear in one of three colors (OEIS A298311). A key result of their work was the congruence $a(7n + 2) \equiv 0 \pmod{7}$ for all $n \geq 0$.
We prove new congruences for the reciprocal crank parity function modulo powers of $7$.
\end{abstract}

\maketitle


\section{Introduction}

A \textbf{partition} of a positive integer $n$ is a non-increasing sequence of positive integers whose sum is $n$ \cite{And76}. Let $p(n)$ denote the number of partitions of $n$, with the convention that $p(0) = 1$ and $p(n) = 0$ when $n$ is not a non-negative integer.
In 1919, Ramanujan \cite{Ram19} announced three elegant congruences satisfied by the partition function $p(n)$. These results reveal a remarkable arithmetic regularity.

\begin{theorem}[Ramanujan's Congruences]
    For every non-negative integer $n$, the partition function satisfies:
    \begin{align*}
        p(5n + 4) &\equiv 0 \pmod{5}, \\
        p(7n + 5) &\equiv 0 \pmod{7}, \\
        p(11n + 6) &\equiv 0 \pmod{11}.
    \end{align*}
\end{theorem}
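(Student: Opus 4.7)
The plan is to handle the three congruences separately, using as the common starting point the generating function $\sum_{n \ge 0} p(n) q^n = 1/(q;q)_\infty$ together with the mod-$p$ Frobenius-type identity $(q;q)_\infty^{p} \equiv (q^p;q^p)_\infty \pmod{p}$, valid for every prime $p$. For the mod $5$ and mod $7$ cases I would aim for the stronger generating-function identities first proved by Ramanujan, which make the divisibility transparent; for mod $11$ I would switch to the modular-forms viewpoint.

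For the congruence modulo $5$, the target is Ramanujan's identity
\[
\sum_{n \ge 0} p(5n+4)\, q^n \;=\; 5\,\frac{(q^5;q^5)_\infty^{\,5}}{(q;q)_\infty^{\,6}}.
\]
To obtain it, I would multiply $1/(q;q)_\infty$ by $(q;q)_\infty^{5}/(q;q)_\infty^{5}$, expand the numerator via Euler's pentagonal number theorem and Jacobi's triple product, and then $5$-dissect the resulting series, keeping only the residue class $n \equiv 4 \pmod 5$. A check on the relevant quadratic forms shows that the surviving terms all carry an explicit factor of $5$, whence the congruence. The mod $7$ case follows the same template but produces a two-term identity
\[
\sum_{n \ge 0} p(7n+5)\, q^n \;=\; 7\,\frac{(q^7;q^7)_\infty^{\,3}}{(q;q)_\infty^{\,4}} \;+\; 49\, q\, \frac{(q^7;q^7)_\infty^{\,7}}{(q;q)_\infty^{\,8}},
\]
both summands of which are divisible by $7$.

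The hard part will be the congruence modulo $11$, which Ramanujan did not prove by elementary $q$-series manipulations. Here I would switch to modular forms: up to the factor $q^{-1/24}$, the generating function $1/(q;q)_\infty$ is a weakly holomorphic modular form of weight $-\tfrac12$ on $\SLZ$. The operator $\Up_{11}$ that extracts the subseries $\sum p(11n+6) q^n$ maps the relevant space of forms of level $11$ to another finite-dimensional space. By multiplying by a carefully chosen power of $\eta$ to raise the weight into a range where the ambient space has small dimension, one reduces the mod $11$ divisibility statement to a finite computation, which one verifies directly. The principal obstacle is organizing this reduction: one must identify an $\eta$-quotient of level $11$ and integer weight whose image under $\Up_{11}$ lies in a space small enough that the congruence can be checked on a bounded number of Fourier coefficients. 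An alternative elementary route goes through Atkin and Winquist's $11$-dissection of $(q;q)_\infty^{10}$, trading the modular-forms machinery for a very intricate $q$-series identity; either way, the mod $11$ case is substantially more delicate than the mod $5$ and mod $7$ cases.
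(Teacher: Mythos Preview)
The paper does not prove this theorem at all: Ramanujan's congruences are stated only as historical background in the introduction, with a citation to Ramanujan's 1919 paper, and no argument is given. So there is no ``paper's own proof'' to compare your proposal against.

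That said, your outline is a standard and essentially correct sketch of how these congruences are classically established. The mod~$5$ and mod~$7$ identities you quote are Ramanujan's, and the dissection strategy you describe (multiply by a suitable power of $(q;q)_\infty$, expand via pentagonal/Jacobi identities, and extract the right residue class) is the textbook route. For mod~$11$ you correctly flag that no comparably clean generating-function identity exists, and both alternatives you mention---a modular-forms argument via $U_{11}$ on a finite-dimensional space, or the Winquist/Atkin $10$-fold-product dissection---are the standard approaches in the literature. One small caveat: in the mod~$5$ and mod~$7$ sketches, the phrase ``a check on the relevant quadratic forms shows that the surviving terms all carry an explicit factor'' hides the real work (Jacobi's identity for $(q;q)_\infty^3$ and careful bookkeeping of residues of pentagonal and triangular exponents), so if you were to write this out in full you would need to make that step explicit rather than assert it.
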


To provide combinatorial explanations for the latter two congruences, Dyson \cite{Dys44} introduced the concept of the \textbf{rank} of a partition.

\begin{definition}[Rank]
    The \textbf{rank} of a partition is defined as its largest part minus the number of its parts.
\end{definition}

Later, in 1988, Andrews and Garvan \cite{AG88} defined the \textbf{crank} of a partition, which provides a unified combinatorial explanation for all three of Ramanujan's congruences.

\begin{definition}[Crank]
    Let $\lambda = (\lambda_1, \lambda_2, \dots, \lambda_k)$ be a partition. Define:
    \begin{itemize}
        \item $\ell(\lambda)$: the largest part of $\lambda$,
        \item $\omega(\lambda)$: the number of $1$'s in $\lambda$,
        \item $\mu(\lambda)$: the number of parts of $\lambda$ larger than $\omega(\lambda)$.
    \end{itemize}
    The \textbf{crank} $c(\lambda)$ is given by:
    \[
    c(\lambda) =
    \begin{cases}
        \ell(\lambda), & \text{if } \omega(\lambda) = 0, \\
        \mu(\lambda) - \omega(\lambda), & \text{if } \omega(\lambda) > 0.
    \end{cases}
    \]
\end{definition}

\begin{definition}
Let $n$ be a non-negative integer. We define:
\begin{enumerate}
    \item $M_e(n)$: the number of partitions of $n$ with even crank,
    \item $M_o(n)$: the number of partitions of $n$ with odd crank,
    \item $M(n) := M_e(n) - M_o(n)$.
\end{enumerate}
\end{definition}

From \cite{Ga}, we have the generating function identity:
\begin{equation*}
\sum_{n=0}^{\infty} M(n)q^n = 2q + \frac{(q;q)_\infty}{(-q;q)^2_\infty},
\end{equation*}
where $(a;q)_\infty$ denotes the standard $q$-Pochhammer symbol, defined by the infinite product:
\[
(a;q)_\infty = \prod_{n=0}^{\infty} (1 - a q^n).
\]
Here and later, $q$ is a complex number with $|q| < 1$. We also use notaion $J_k=(q^k;q^k)_\infty$ for integer $k>0$.

Recent work by Amdeberhan and Merca~\cite{AM-arxiv} has examined arithmetic properties of the sequence $a(n)$, which is defined as the reciprocal of the crank parity function arising from the generating function of $M_e(n) - M_o(n)$:
\begin{equation}\label{def-a}
    \sum_{n=0}^{\infty} a(n)q^n = \frac{(-q;q)^2_\infty}{(q;q)_\infty}.
\end{equation}
A key combinatorial interpretation of $a(n)$ is that it enumerates the number of integer partitions of weight $n$ wherein even parts are monochromatic (i.e., appear in only one color), while odd parts may appear in one of three colors~(see OEIS A298311). This interpretation, along with several others, is presented in the paper by Amdeberhan and Merca~\cite{AM-arxiv}.

Furthermore, they~\cite{AM-arxiv} utilized the Mathematica package \textsc{RaduRK}, developed by Smoot~\cite{S21}, to prove the following generating function identity for $a(7n + 2)$.

More recently, Hirschhorn and Sellers~\cite{HS-arxiv} considered a generalization of the partition function $a(n)$. For any integer $r \geq 1$, they defined $a_r(n)$ as the number of partitions of $n$ in which each odd part may be assigned one of $r$ colors. The generating function for $a_r(n)$ is given by
\begin{equation}
\sum_{n=0}^{\infty} a_r(n)q^n = \frac{(q^{2};q^{2})_\infty^{r-1}}{(q;q)^{r}_\infty}.
\label{eq:gen_func}
\end{equation}
Note that $a_1(n) = p(n)$ (the ordinary partition function), $a_2(n) = \overline{p}(n)$ (the number of overpartitions of $n$~\cite{CL04}), and $a_3(n) = a(n)$.

Hirschhorn and Sellers~\cite{HS-arxiv} also employed theta function identities and $q$-series manipulations to establish arithmetic congruences modulo 7 for $a_k(n)$.

In this paper, we establish the following theorem.
\begin{theorem}\label{thm-main}
For $\alpha \geq 0$, we have
\[
a(7^\alpha n + \lambda_\alpha) \equiv 0 \pmod{7^{\lfloor\frac{\alpha+1}{2}\rfloor}} \quad \text{if } 24\lambda_\alpha \equiv -1 \pmod{7^{\alpha}}.
\]
\end{theorem}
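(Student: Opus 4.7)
The plan is to adapt Watson's classical approach to Ramanujan-type congruences modulo $7^\alpha$ to the present eta quotient. Writing the generating function in terms of the Dedekind $\eta$-function, with $q=e^{2\pi i\tau}$,
\begin{equation*}
\sum_{n\ge 0} a(n)\,q^n \;=\; \frac{J_2^2}{J_1^3} \;=\; q^{-1/24}\,\frac{\eta(2\tau)^2}{\eta(\tau)^3},
\end{equation*}
so the relevant arithmetic progressions are indexed by the residue of $24n+1$ modulo $7^\alpha$; the class $\lambda_\alpha$ is precisely the $n$ for which $24n+1\equiv 0\pmod{7^\alpha}$. The substitution $q\mapsto q^{24}$ embeds the problem into the arithmetic of modular functions on $\Gamma_0(N)$ for some $N$ divisible by $14$, and extraction of the progression $n\equiv\lambda_\alpha\pmod{7^\alpha}$ is then realised as iterated action of the $U_7$-operator on a normalised eta quotient.

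I would then choose a Hauptmodul $T$ for $\Gamma_0(14)$ (or a suitable level-$14N'$ cover if needed), built from eta-products at levels dividing $14$, so that the classical modular equation of degree $7$ produces a closed-form expression for $U_7(T^k)$ as an explicit polynomial in $T$. Define the sequence
\begin{equation*}
L_\alpha \;:=\; c_\alpha^{-1}\,U_{7^\alpha}\!\left(q\cdot\frac{\eta(2\tau)^2}{\eta(\tau)^3}\right),
\end{equation*}
with $c_\alpha$ a carefully chosen normalising eta quotient that is a unit in $\mathbb{Z}_7[[q]]$, and prove by induction on $\alpha$ a two-step recursion of the shape
\begin{equation*}
L_{\alpha+2} \;=\; A(T)\,L_{\alpha+1} \;+\; B(T)\,L_\alpha,
\end{equation*}
with $A(T),B(T)\in\mathbb{Z}[T]$ whose coefficients have prescribed $7$-adic valuations. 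The base cases $\alpha=0,1$ would be verified directly: $\alpha=1$ is exactly the Amdeberhan--Merca congruence $a(7n+2)\equiv 0\pmod 7$, and can be read off from the explicit $7$-dissection of $\eta(2\tau)^2/\eta(\tau)^3$.

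From the recursion one reads off that the coefficients of $L_\alpha$ lie in $7^{\lfloor(\alpha+1)/2\rfloor}\mathbb{Z}[T]$, which translates back, via the $\mathbb{Z}_7[[q]]$-unit $c_\alpha$, into the claimed congruence $a(7^\alpha n+\lambda_\alpha)\equiv 0\pmod{7^{\lfloor(\alpha+1)/2\rfloor}}$. The growth by one power of $7$ only at every other step of $\alpha$ is precisely the Watson phenomenon, reflecting the fact that $A(T)$ contributes one factor of $7$ while $B(T)$ contributes two.

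The main obstacle is the explicit derivation of this recursion: pinning down the exact polynomials $A(T)$ and $B(T)$ and verifying the $7$-adic valuations of their coefficients. This amounts to a concrete but nontrivial computation in the ring of modular functions on $\Gamma_0(14)$, combining the $U_7$-evaluation of $T^k$ with the factor $\eta(2\tau)^2/\eta(\tau)^3$. Once that modular equation and the associated $U_7$-polynomials are in hand, the induction and the passage back to the original congruence are routine.
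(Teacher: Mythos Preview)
Your overall philosophy---normalise by an eta quotient, iterate $U_7$, and track $7$-adic valuations by induction---is the right one, and is also the paper's. But two concrete points in your plan would fail as written.

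First, you propose to use a Hauptmodul for $\Gamma_0(14)$. There is none: $X_0(14)$ has genus $1$. The hedge ``or a suitable level-$14N'$ cover'' does not help, since raising the level cannot lower the genus. The paper works instead with the Hauptmodul $t=qJ_7^4/J_1^4$ for $\Gamma_0(7)$ (genus $0$) and adjoins two explicit level-$14$ functions
\[
p_0=\frac{qJ_{14}^4J_1^4}{J_7^4J_2^4},\qquad p_1=\frac{1}{7}\Bigl(\frac{J_{14}J_1^7}{J_7J_2^7}-8\Bigr),
\]
so that all the $L_\alpha$ live in the rank-$3$ $\mathbb{Z}[t]$-module $\mathbb{Z}[t]\cdot 1+\mathbb{Z}[t]\cdot p_0+\mathbb{Z}[t]\cdot p_1$. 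This is the substantive structural input your sketch is missing.

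Second, because the module has rank $3$ rather than $1$, there is no two-term scalar recursion $L_{\alpha+2}=A(T)L_{\alpha+1}+B(T)L_\alpha$ of the kind you posit. What the paper proves instead (following the Paule--Radu scheme) is that $U_7$, twisted alternately by $A=\dfrac{J_2^2J_{49}^3}{q^2J_1^3J_{98}}$ and by $B=1$, maps the structured sets
\[
X_A,\,X_B\subset \mathbb{Z}[t]\cdot 1+\mathbb{Z}[t]\cdot p_0+\mathbb{Z}[t]\cdot p_1
\]
into one another, with $U_A(X_A)\subset 7X_B$ and $U_B(X_B)\subset X_A$; the sets encode the precise $7$-adic floor-functions on each coefficient. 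The single extra factor of $7$ appears only at the $U_A$ step, which is why the exponent is $\lfloor(\alpha+1)/2\rfloor$. Establishing these inclusions is not a single modular equation but requires forty-two fundamental relations (Groups I--VI in the Appendix), giving $U_A(p_it^k)$ and $U_B(p_it^k)$ explicitly for seven consecutive $k$, together with Lemmas~\ref{lem1}--\ref{lem2} to propagate them to all $k$. Your proposal identifies that ``pinning down the exact polynomials'' is the main obstacle, but underestimates its shape: it is a $3\times 3$ system over $\mathbb{Z}[t]$, not a pair $A(T),B(T)$, and the genus-$1$ issue is precisely why.
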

\section{Modular equation}
\label{sec-lemma}

Our proof of Theorem \ref{thm-main} relies on the modular identities in the Appendix.  Many of these, specifically those in Groups $I$-$IV$, can be automatically verified using Garvan's MAPLE package ETA (see \eqref{r:eta} and \cite{gtutorial})
\begin{align}
\label{r:eta}
https://qseries.org/fgarvan/qmaple/ETA/
\end{align}

For example, the package  yields the identity for $L_1$:
\begin{align}\label{L-1}
L_1=&-7 p_1+2\cdot7 t-11\cdot7^{2} p_1t+8\cdot7 p_0t+29\cdot7^{2} t^{2}-23\cdot 7^{3} p_1t^{2}\\
&+12\cdot 7^{3} p_0t^{2}+10\cdot 7^{4} t^{3}-2\cdot 7^{5} p_1t^{3}+24\cdot 7^{4} p_0 t^{3}
+7^{6}t^{4}+2\cdot 7^{6} p_0t^{4}.\nonumber
\end{align}

\subsection{\textbf{A modular equation}}

We define
\begin{align}
\label{eq:t7def}
t := t(\tau) := q \frac{J_7^4}{J_1^4},
\end{align}
where \( q = \exp(2\pi i\tau) \). Note that \( t(\tau) \) is a Hauptmodul for \( \Gamma_0(7) \) \cite{Ma09}.

The following result from \cite[Theorem 2.6]{Chen-Chen-Garvan-arxiv} will be used later.

\begin{theorem}\label{thm-ai}
Define
\begin{align*}
a_0(t) &= t, \\
a_1(t) &= 7^2 t^2 + 4 \cdot 7 t,  \\
a_2(t) &= 7^4 t^3 + 4 \cdot 7^3 t^2 + 46 \cdot 7 t,  \\
a_3(t) &= 7^6 t^4 + 4 \cdot 7^5 t^3 + 46 \cdot 7^3 t^2 + 272 \cdot 7 t,  \\
a_4(t) &= 7^8 t^5 + 4 \cdot 7^7 t^4 + 46 \cdot 7^5 t^3 + 272 \cdot 7^3 t^2 + 845 \cdot 7 t,  \\
a_5(t) &= 7^{10} t^6 + 4 \cdot 7^9 t^5 + 46 \cdot 7^7 t^4 + 272 \cdot 7^5 t^3 + 845 \cdot 7^3 t^2 + 176 \cdot 7^2 t, \\
a_6(t) &= 7^{12} t^7 + 4 \cdot 7^{11} t^6 + 46 \cdot 7^9 t^5 + 272 \cdot 7^7 t^4 + 845 \cdot 7^5 t^3 + 176 \cdot 7^4 t^2 + 82 \cdot 7^2 t,
\end{align*}
where \( t = t(\tau) \) is as in \eqref{eq:t7def}. Then
\[
t(\tau)^7 - \sum_{l=0}^{6} a_l\big(t(7\tau)\big) \, t(\tau)^l = 0.
\]
\end{theorem}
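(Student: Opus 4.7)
The plan is to establish this polynomial identity as a modular equation on $\Gamma_0(49)$. Writing $t(\tau) = \eta(7\tau)^4/\eta(\tau)^4$, we see that $t$ is an eta-quotient and a Hauptmodul for $\Gamma_0(7)$. Since $[\Gamma_0(7) : \Gamma_0(49)] = 7$, the function $t(\tau)$ is algebraic of degree $7$ over the subfield of the function field of $X_0(49)$ generated by $T := t(7\tau)$. Hence a relation of the form $t(\tau)^7 = \sum_{l=0}^{6} a_l(T)\, t(\tau)^l$ must exist, and because the poles of $t(\tau)$ on $X_0(49)$ are concentrated at cusps, the $a_l(T)$ may be taken to be polynomials in $T$, whose precise shape (and in particular, their degrees in $T$) is what one needs to pin down.

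First I would compute the divisors of $t(\tau)$ and $T = t(7\tau)$ on $X_0(49)$. The cusps of $\Gamma_0(49)$ are represented by $a/c$ with $c \mid 49$, and the order of each eta-quotient at each cusp can be read off from Ligozat's formula. These divisor calculations give sharp bounds on $\deg_T a_l(T)$; indeed, one expects $\deg_T a_l(T) = l+1$ for $0 \leq l \leq 6$, matching the shape of the polynomials stated in the theorem. With these degree bounds in place, the coefficients of each $a_l(T)$ are determined by a finite linear system obtained from comparison of $q$-expansions at $\infty$, using the expansion $t(\tau) = q + O(q^2)$ and the known $q$-expansions of $J_k$.

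To complete the proof, I would form $F(\tau) := t(\tau)^7 - \sum_{l=0}^{6} a_l(T(\tau))\, t(\tau)^l$ using the candidate polynomials, obtain an explicit upper bound $N$ for the total order of poles of $F$ at the cusps of $\Gamma_0(49)$, and then verify that the $q$-expansion of $F$ at $\infty$ vanishes beyond order $N$. The valence formula applied to the modular function $F$ on $X_0(49)$ then forces $F \equiv 0$. The main obstacle I anticipate is the careful cusp-order bookkeeping for eta-quotients of level $49$ (there are $8$ cusps to track), and ensuring the bound $N$ is tight enough that the required $q$-series verification is finite and computationally feasible; this is where Garvan's \texttt{ETA} package, already invoked elsewhere in the paper for identities in Groups $I$--$IV$, would in principle automate the check.
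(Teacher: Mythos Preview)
Your outline is a correct and standard route to the modular equation: identify $t$ as the Hauptmodul of $\Gamma_0(7)$, use the index $[\Gamma_0(7):\Gamma_0(49)]=7$ to get a degree-$7$ relation over $\mathbb{C}(T)$ with $T=t(7\tau)$, bound the pole orders at the eight cusps of $X_0(49)$ via Ligozat's formula to force the $a_l$ to be polynomials of the asserted degrees, solve for the coefficients from $q$-expansions, and certify the vanishing of $F$ by the valence formula once enough Fourier coefficients have been checked.

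However, there is nothing to compare against: the paper does \emph{not} prove this theorem. It is quoted verbatim as \cite[Theorem 2.6]{Chen-Chen-Garvan-arxiv} and used as input. The surrounding remark about Garvan's \texttt{ETA} package refers to the verification of the Appendix identities (Groups~I--IV), not to this modular equation. So your proposal supplies an argument where the paper simply imports a known result; what you sketch is in fact essentially how such modular equations are established in the cited source (and how the \texttt{ETA} package would mechanically verify the identity once the candidate polynomials are in hand). One small point worth tightening: the claim that ``the $a_l(T)$ may be taken to be polynomials in $T$'' because the poles of $t$ are cuspidal is true but needs the further observation that $T$ has its \emph{unique} pole on $X_0(49)$ at the cusp $0$, so that any rational function in $T$ holomorphic away from $T=\infty$ is a polynomial; this is what pins down $\deg_T a_l = l+1$ and $a_l(0)=0$.
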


\subsection{The \( U_p \) Operator}

Let \( p \) be a prime and
\[
f = \sum_{m=m_0}^\infty a(m) q^m
\]
be a formal Laurent series. The \( U_p \) operator is defined by
\begin{equation}\label{Updeffls}
U_p(f) := \sum_{p m \ge m_0} a(p m) q^m.
\end{equation}
If \( f \) and \( h \) are modular functions (with \( q = \exp(2\pi i\tau) \)), then
\[
U_p(f) = \frac{1}{p} \sum_{j=0}^{p-1} \stroke{f}{\MAT{1}{j}{0}{p}}
 = \frac{1}{p} \sum_{j=0}^{p-1} f\left(\frac{\tau + j}{p}\right),
\]
and for \( H(\tau) = h(p\tau) \), we have
\begin{equation}\label{u71}
U_p(fH)(\tau) = h(\tau) U_p(f)(\tau).
\end{equation}

\begin{theorem}[{\cite[Lemma 7, p.138]{At-Le70}}]
Let \( p \) be prime. If \( f \) is a modular function on \( \Gamma_0(pN) \) and \( p \mid N \), then \( U_p(f) \) is a modular function on \( \Gamma_0(N) \).
\end{theorem}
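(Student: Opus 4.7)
The plan is to verify that $U_p(f)$ satisfies the three properties of a modular function on $\Gamma_0(N)$: meromorphy on the upper half plane $\uhp$, invariance under the action of every $\gamma \in \Gamma_0(N)$, and meromorphy at every cusp. Meromorphy on $\uhp$ is immediate term by term from the identity
\begin{equation*}
U_p(f)(\tau) = \frac{1}{p}\sum_{j=0}^{p-1} f\!\left(\frac{\tau+j}{p}\right),
\end{equation*}
so the real content is in the other two properties.

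For invariance, I would set $\alpha_j := \MAT{1}{j}{0}{p}$ and establish the coset identity that for every $\gamma = \MAT{a}{b}{c}{d} \in \Gamma_0(N)$, the assignment $j \mapsto j'$ defined modulo $p$ by $a j' \equiv b + jd \pmod{p}$ is a well-defined bijection of $\{0,\dots,p-1\}$, and the matrix $\gamma_j := \alpha_j \gamma \alpha_{j'}^{-1}$ lies in $\Gamma_0(pN)$. The defining congruence makes sense because $p \mid N \mid c$ forces $ad \equiv 1 \pmod{p}$, making $a$ (and also $d$) a unit modulo $p$; hence $j \mapsto a^{-1}d\,j + a^{-1}b$ is an affine bijection of $\mathbb{Z}/p\mathbb{Z}$. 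A direct computation gives $\det \gamma_j = \det \gamma = 1$ and the lower-left entry of $\gamma_j$ is $pc$, which is divisible by $pN$ exactly because $N \mid c$. Applying $\Gamma_0(pN)$-invariance of $f$ then yields $f(\alpha_j \gamma \tau) = f(\alpha_{j'} \tau)$, and summing over $j$ produces $U_p(f)(\gamma \tau) = U_p(f)(\tau)$.

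For the cusp condition, represent an arbitrary cusp of $\Gamma_0(N)$ as $\sigma \infty$ with $\sigma \in \SLZ$. For each $j$, write the integer matrix $\alpha_j \sigma$ of determinant $p$ in Hermite normal form as $\beta_j \alpha'_j$ with $\beta_j \in \SLZ$ and $\alpha'_j$ upper triangular of determinant $p$; then $f(\alpha_j \sigma \tau) = (f \circ \beta_j)(\alpha'_j \tau)$, and $f \circ \beta_j$ has a meromorphic $q$-expansion at $\infty$ because this expansion encodes the behavior of $f$ at the cusp $\beta_j \infty$ of $\Gamma_0(pN)$, where by hypothesis $f$ has only finitely many terms of negative order. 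Substituting via the upper-triangular $\alpha'_j$ and summing over $j$ yields a meromorphic Laurent expansion in the local uniformizer at $\sigma \infty$, as required.

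The main obstacle is the coset-permutation identity $\alpha_j \gamma = \gamma_j \alpha_{j'}$ with $\gamma_j \in \Gamma_0(pN)$ and $j \mapsto j'$ a permutation of residues modulo $p$; this is where the hypothesis $p \mid N$ is used in an essential way, through the invertibility of $a$ modulo $p$. Everything else—the determinant calculation, the divisibility $pN \mid pc$, and the cusp bookkeeping via Hermite normal form—is routine once this identity is in hand.
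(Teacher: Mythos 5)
The paper does not prove this statement at all: it is imported verbatim as Lemma 7 of Atkin--Lehner \cite{At-Le70}, so there is no internal proof to compare against. Your argument is the standard direct verification of that lemma and it is correct. The key computation checks out: with $\alpha_j = \MAT{1}{j}{0}{p}$ and $\gamma = \MAT{a}{b}{c}{d} \in \Gamma_0(N)$, one gets $\alpha_j\gamma\alpha_{j'}^{-1} = \MAT{a+jc}{\ast}{pc}{d-cj'}$ with upper-right entry $\bigl(b+jd-(a+jc)j'\bigr)/p$, which is integral precisely when $aj' \equiv b+jd \pmod p$ (using $p \mid c$ to kill the $jcj'$ term); the determinant is $1$ and $pN \mid pc$, so $\gamma_j \in \Gamma_0(pN)$, and you correctly locate the use of $p \mid N$ in forcing $p \mid c$ so that $a$ is a unit mod $p$ and $j \mapsto j'$ is a bijection. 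The only spot that deserves one more sentence is the cusp condition: after writing $\alpha_j\sigma = \beta_j\alpha'_j$, each summand $f(\beta_j\alpha'_j\tau)$ expands in powers of $e^{2\pi i\tau/(d'h)}$, which need not be powers of the local uniformizer $e^{2\pi i\tau/h'}$ at $\sigma\infty$ for $\Gamma_0(N)$; it is only the \emph{sum} that is, and this follows from the $\Gamma_0(N)$-invariance you established in the previous step (which gives periodicity under $\tau \mapsto \tau + h'$). With that remark added, your proof is complete, and it is essentially the same argument as in Atkin--Lehner's original source.
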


\subsection{A Fundamental Lemma}
\label{subsec:fundlem5}

The following lemma is a direct consequence of Theorem \ref{thm-ai}.

\begin{lemma}[Fundamental Lemma]
\label{lem:fun7}
Let \( u = u(\tau) \) and \( j \in \mathbb{Z} \). Then
\[
U_7(u t^j) = \sum_{l=0}^{6} a_l(t) U_7(u t^{j+l-7}),
\]
where \( t = t(\tau) \) is defined in \eqref{eq:t7def} and the \( a_j(t) \) are given in Theorem \ref{thm-ai}.
\end{lemma}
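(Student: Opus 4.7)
The plan is to derive the Fundamental Lemma as an essentially immediate consequence of the modular equation in Theorem \ref{thm-ai} combined with the multiplicative property \eqref{u71} of the $U_7$ operator. No new modular-forms input is needed: the whole argument is three lines of manipulation with the $U_7$ operator.

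First I would rewrite Theorem \ref{thm-ai} in the solved form
\[
t(\tau)^7 = \sum_{l=0}^{6} a_l\bigl(t(7\tau)\bigr)\, t(\tau)^l.
\]
Multiplying both sides by $u(\tau)\, t(\tau)^{j-7}$ yields the identity
\[
u(\tau)\, t(\tau)^j = \sum_{l=0}^{6} a_l\bigl(t(7\tau)\bigr)\, u(\tau)\, t(\tau)^{j+l-7},
\]
which is simply an algebraic rearrangement valid in the field of modular functions (or formal Laurent series in $q$), regardless of the value of $j \in \mathbb{Z}$.

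Next I would apply the $U_7$ operator to both sides. Since $U_7$ is $\mathbb{C}$-linear on formal Laurent series by \eqref{Updeffls}, this gives
\[
U_7\bigl(u\, t^j\bigr) = \sum_{l=0}^{6} U_7\Bigl( a_l\bigl(t(7\tau)\bigr)\, u(\tau)\, t(\tau)^{j+l-7}\Bigr).
\]
The key observation is that each $a_l(t(7\tau))$ has the form $H(\tau) = h(7\tau)$ with $h = a_l \circ t$, so the multiplicativity identity \eqref{u71} applies termwise and pulls $a_l(t)$ out:
\[
U_7\Bigl( a_l\bigl(t(7\tau)\bigr)\, u(\tau)\, t(\tau)^{j+l-7}\Bigr) = a_l\bigl(t(\tau)\bigr)\, U_7\bigl(u\, t^{j+l-7}\bigr).
\]
Summing over $l$ gives exactly the stated formula.

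There is no genuine obstacle to this argument; the only thing to keep in mind is that one must apply \eqref{u71} with the correct identification of the inner function (a polynomial in $t(7\tau)$, not in $t(\tau)$), which is guaranteed by the way Theorem \ref{thm-ai} is formulated. The lemma is therefore truly a corollary of Theorem \ref{thm-ai}, and its usefulness comes later: iterating it repeatedly will allow one to reduce any $U_7(u\, t^j)$ with $j < 0$ to a finite $\mathbb{Z}[t]$-linear combination of finitely many basic generators, which is the mechanism underlying the inductive proof of Theorem \ref{thm-main}.
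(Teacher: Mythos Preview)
Your proposal is correct and matches the paper's approach: the paper simply states that Lemma~\ref{lem:fun7} is a direct consequence of Theorem~\ref{thm-ai}, and your argument---rewriting the modular equation as $t(\tau)^7 = \sum_{l=0}^{6} a_l(t(7\tau))\,t(\tau)^l$, multiplying by $u(\tau)\,t(\tau)^{j-7}$, and applying $U_7$ together with \eqref{u71}---is precisely the standard way to make that deduction explicit.
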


From Theorem \ref{thm-ai}, we verify that there exist integers \( s(j,l) \) such that
\begin{equation}
a_j(t) = \sum_{l=1}^{7} s(j,l) 7^{\lfloor (7l+j-4)/4 \rfloor} t^l
\label{eq:aj}
\end{equation}
for \( 0 \le j \le 6 \).

Let \( g = \sum_{n} a_n t^n \), \( g \neq 0 \), where only finitely many \( a_n \) with \( n < 0 \) are nonzero. The order of \( g \) (with respect to \( t \)) is the smallest integer \( N \) such that \( a_N \neq 0 \), denoted \( N = \mathrm{ord}_t(g) \).

\begin{lemma}[{\cite[Lemma 3.6]{Chen-Chen-Garvan-arxiv}}]
\label{lem1}
Let \( u, v_1, v_2, v_3 : \mathbb{H} \rightarrow \mathbb{C} \) and \( l \in \mathbb{Z} \). Suppose that for \( l \leq k \leq l+6 \) and \( i = 1, 2, 3 \), there exist Laurent polynomials \( p_k^{(i)}(t) \in \mathbb{Z}[t,t^{-1}] \) such that
\begin{align}
U_7(u t^k) &= v_1 p_k^{(1)}(t) + v_2 p_k^{(2)}(t) + v_3 p_k^{(3)}(t), \label{eq:lem11} \\
\mathrm{ord}_t(p_k^{(i)}(t)) &\geq \left\lceil \frac{k + s_i}{7} \right\rceil, \label{eq:lem12}
\end{align}
for fixed integers \( s_i \). Then there exist families of Laurent polynomials \( p_k^{(i)}(t) \in \mathbb{Z}[t,t^{-1}] \), \( k \in \mathbb{Z} \), such that \eqref{eq:lem11} and \eqref{eq:lem12} hold for all \( k \in \mathbb{Z} \).
\end{lemma}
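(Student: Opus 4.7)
The plan is to use the Fundamental Lemma \ref{lem:fun7} as a seven-term recurrence to bootstrap from the seven hypothesized base cases $k \in \{l, l+1, \ldots, l+6\}$ to all of $\mathbb{Z}$, by induction in both directions. Since
\[
U_7(u t^j) \;=\; \sum_{m=0}^{6} a_m(t)\, U_7(u t^{j+m-7})
\]
relates the eight consecutive values $U_7(u t^{j-7}), U_7(u t^{j-6}), \ldots, U_7(u t^{j})$, any seven consecutive indices determine the remaining ones.

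For the forward step (extending to $j \ge l+7$), I would substitute the inductive decompositions for $U_7(u t^{j-7}), \ldots, U_7(u t^{j-1})$ into the Fundamental Lemma and define
\[
p_j^{(i)}(t) \;:=\; \sum_{m=0}^{6} a_m(t)\, p_{j+m-7}^{(i)}(t) \;\in\; \mathbb{Z}[t,t^{-1}].
\]
This gives \eqref{eq:lem11} at $k=j$ by linearity. From Theorem \ref{thm-ai} one reads off $\mathrm{ord}_t(a_m(t)) \ge 1$ for each $0 \le m \le 6$, and using $\lceil (N-7)/7 \rceil = \lceil N/7 \rceil - 1$,
\[
\mathrm{ord}_t\!\left(a_m(t)\, p_{j+m-7}^{(i)}(t)\right) \;\ge\; 1 + \left\lceil \tfrac{j+m-7+s_i}{7}\right\rceil \;=\; \left\lceil \tfrac{j+m+s_i}{7}\right\rceil \;\ge\; \left\lceil \tfrac{j+s_i}{7}\right\rceil,
\]
which yields the order bound \eqref{eq:lem12} at $k=j$.

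For the backward step (extending to $j \le l-1$), I would apply the Fundamental Lemma at index $j+7$ and solve for the $m=0$ term. Because $a_0(t) = t$ is a unit in $\mathbb{Z}[t,t^{-1}]$, I set
\[
p_j^{(i)}(t) \;:=\; t^{-1}\!\left( p_{j+7}^{(i)}(t) - \sum_{m=1}^{6} a_m(t)\, p_{j+m}^{(i)}(t) \right) \;\in\; \mathbb{Z}[t,t^{-1}].
\]
By the inductive hypothesis, $\mathrm{ord}_t(p_{j+7}^{(i)}) \ge \lceil (j+s_i)/7\rceil + 1$, while $\mathrm{ord}_t(a_m\, p_{j+m}^{(i)}) \ge 1 + \lceil (j+m+s_i)/7\rceil \ge \lceil (j+s_i)/7\rceil + 1$ for $1 \le m \le 6$. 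Thus the bracketed expression has order at least $\lceil (j+s_i)/7\rceil + 1$, and dividing by $t$ recovers the bound \eqref{eq:lem12} at $k=j$.

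The delicate point, and the reason the statement is calibrated just so, is that the order estimates are tight: the scheme works precisely because $a_0(t)$, the coefficient that must be inverted in the backward direction, has $\mathrm{ord}_t$ equal to exactly $1$, which provides just enough slack to absorb the factor $t^{-1}$ without violating \eqref{eq:lem12}. Once this observation is in hand, the rest is a straightforward double induction on $k$, and coefficients remain in $\mathbb{Z}$ throughout because each $a_m(t) \in \mathbb{Z}[t]$ and $t^{-1} \in \mathbb{Z}[t,t^{-1}]$.
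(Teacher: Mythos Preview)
Your argument is correct. The paper does not supply its own proof of Lemma~\ref{lem1}; it is quoted verbatim from \cite[Lemma 3.6]{Chen-Chen-Garvan-arxiv}, and your double induction via the Fundamental Lemma~\ref{lem:fun7}---using $\mathrm{ord}_t(a_m)\ge 1$ for the forward step and the invertibility of $a_0(t)=t$ in $\mathbb{Z}[t,t^{-1}]$ for the backward step---is exactly the intended mechanism behind that cited result.
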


\begin{lemma}[{\cite[Lemma 3.7]{Chen-Chen-Garvan-arxiv}}]
\label{lem2}
Let \( u, v_1, v_2, v_3 : \mathbb{H} \rightarrow \mathbb{C} \) and \( l \in \mathbb{Z} \). Suppose that for \( l \leq k \leq l+6 \) and \( i = 1, 2, 3 \), there exist Laurent polynomials \( p_k^{(i)}(t) \in \mathbb{Z}[t,t^{-1}] \) such that
\begin{align}
U_7(u t^k) &= v_1 p_k^{(1)}(t) + v_2 p_k^{(2)}(t) + v_3 p_k^{(3)}(t), \label{eq:lem21}
\end{align}
where
\begin{align}
p_k^{(i)}(t) = \sum_{n} c_i(k,n) 7^{\left\lfloor \frac{7n - k + r_i}{4} \right\rfloor} t^n, \label{eq:lem22}
\end{align}
with integers \( r_i \) and \( c_i(k,n) \). Then there exist families of Laurent polynomials \( p_k^{(i)}(t) \in \mathbb{Z}[t,t^{-1}] \), \( k \in \mathbb{Z} \), of the form \eqref{eq:lem22} for which property \eqref{eq:lem21} holds for all \( k \in \mathbb{Z} \).
\end{lemma}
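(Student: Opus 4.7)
The plan is to prove Lemma \ref{lem2} by two-sided induction on $k$, taking the hypothesis window $l \leq k \leq l+6$ as the base case and extending to all $k \in \mathbb{Z}$ via repeated application of the Fundamental Lemma \ref{lem:fun7}. Since that lemma provides a linear relation among seven consecutive terms $U_7(u t^{k+j-7})$, $0 \leq j \leq 6$, the decomposition is determined by any contiguous window of length seven; the content of Lemma \ref{lem2} is that the prescribed 7-adic shape \eqref{eq:lem22} is preserved under this extension.

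First I would write down the two recurrences for the polynomials $p_k^{(i)}(t)$. For $k \geq l+7$, inserting the inductive decomposition of each $U_7(u t^{k+j-7})$ into Lemma \ref{lem:fun7} and matching coefficients of the $v_i$ yields
\[
p_k^{(i)}(t) = \sum_{j=0}^{6} a_j(t)\, p_{k+j-7}^{(i)}(t).
\]
For $k \leq l-1$, using $a_0(t) = t$ to isolate the lowest term in Lemma \ref{lem:fun7} applied at $k+7$ produces
\[
p_k^{(i)}(t) = t^{-1} p_{k+7}^{(i)}(t) - t^{-1} \sum_{j=1}^{6} a_j(t)\, p_{k+j}^{(i)}(t),
\]
whose right-hand side involves only $p_{k'}^{(i)}$ for $k' \in \{k+1, \ldots, k+7\}$. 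These recurrences evidently preserve the Laurent-polynomial property and have integer coefficients; the only real issue is the 7-adic structure.

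The main obstacle is checking that the 7-adic shape \eqref{eq:lem22} survives these recurrences. Substituting \eqref{eq:aj} and the inductive form of each $p_{k'}^{(i)}$, the coefficient of $t^n$ on the right-hand side receives $\mathbb{Z}$-linear contributions with 7-adic valuation at least $\lfloor X/4 \rfloor + \lfloor Y/4 \rfloor$, where in both cases a direct calculation yields $X + Y = z + 3$ with $z := 7n - k + r_i$ (in the downward case, the index shift $e + m = n + 1$ caused by $t^{-1}$ combines with the change $k \mapsto k + j$ to produce the same value). It therefore reduces to the elementary claim
\[
\left\lfloor \frac{X}{4} \right\rfloor + \left\lfloor \frac{Y}{4} \right\rfloor \geq \left\lfloor \frac{z}{4} \right\rfloor \quad \text{whenever } X + Y = z + 3.
\]

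I expect this elementary claim to be the only delicate point, and I would prove it by a residue analysis. Writing $z = 4m + r$ with $r \in \{0, 1, 2, 3\}$, the sum $(X \bmod 4) + (Y \bmod 4)$ lies in $\{0, 1, \ldots, 6\}$ and is congruent to $r + 3 \pmod 4$. When $r = 0$, it is forced to equal $3$ (since the alternative $7$ exceeds $6$), so no carry occurs and $\lfloor X/4 \rfloor + \lfloor Y/4 \rfloor = \lfloor (z + 3)/4 \rfloor = m = \lfloor z/4 \rfloor$; when $r \in \{1, 2, 3\}$, even a worst-case carry reduces the sum of floors at most to $\lfloor (z+3)/4 \rfloor - 1 = m = \lfloor z/4 \rfloor$. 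Thus the form \eqref{eq:lem22} propagates through both inductive steps, completing the proof.
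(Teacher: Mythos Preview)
Your argument is correct. The present paper does not actually prove Lemma \ref{lem2}; it is quoted from \cite[Lemma~3.7]{Chen-Chen-Garvan-arxiv}, so there is no in-paper proof to compare against. Your two-sided induction via the Fundamental Lemma \ref{lem:fun7}, combined with the $7$-adic structure \eqref{eq:aj} of the $a_j(t)$ and the floor inequality $\lfloor X/4\rfloor+\lfloor Y/4\rfloor\ge\lfloor z/4\rfloor$ for $X+Y=z+3$, is precisely the standard method used in that reference; the residue check at $z\equiv 0\pmod 4$ that you single out is indeed the only place where the margin is tight.
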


\section{Proof of Theorem \ref{thm-main}}
The proof relies on the forty-two fundamental relations listed in Appendix~\ref{funcr-7}.
These identities can be established using the algorithm described in \cite[Section~2C, pp.~8--9]{Ch-Ch-Ga20}.

From \eqref{def-a}, we have
\begin{equation*}
    \sum_{n=0}^{\infty} a(n)q^n = \frac{(-q;q)^2_\infty}{(q;q)_\infty}.
\end{equation*}

For a function $f : \mathbb{H} \rightarrow \mathbb{C}$, define operators $U_A(f)$ and $U_B(f) : \mathbb{H} \rightarrow \mathbb{C}$ by
\[
U_A(f) := U_7(A f), \quad U_B(f) := U_7(B f),
\]
where
\[
A := \frac{J_{2}^2 J_{49}^3}{q^2 J_1^3 J_{98}}, \quad B := 1.
\]

Define the initial function
\[
L_0 := 1,
\]
and set
\[
p_0 := \frac{q J_{14}^4 J_1^4}{J_7^4 J_2^4}, \quad
p_1 := \frac{1}{7} \left( \frac{J_{14} J_1^7}{J_7 J_{2}^7} - 8 \right).
\]
For $\alpha \ge 0$, define recursively
\[
L_{2\alpha+1} := U_A(L_{2\alpha}), \quad L_{2\alpha+2} := U_B(L_{2\alpha+1}).
\]

Using \eqref{Updeffls}, \eqref{u71}, and \eqref{def-a}, one can verify that for $\alpha \ge 0$,
\begin{align*}
L_{2\alpha-1} &= \frac{J_7^3}{J_{14}} \sum_{n=0}^{\infty} a(7^{2\alpha-1} n + \lambda_{2\alpha-1}) q^n, \\
L_{2\alpha} &= \frac{J_{1}^3}{J_2} \sum_{n=0}^{\infty} a(7^{2\alpha} n + \lambda_{2\alpha}) q^n,
\end{align*}
where
\[
\lambda_{2\alpha} = \lambda_{2\alpha+1} = \frac{7^{2\alpha} - 1}{24}.
\]

Following \cite{Pa-Ra12}, we call a map $a : \mathbb{Z} \longrightarrow \mathbb{Z}$ a \textit{discrete function} if it has finite support. Define the sets
\begin{align*}
X_A &:= \left\{ \sum_{k=1}^{\infty} r_1(k) 7^{\left\lfloor \frac{7k+2}{4} \right\rfloor} t^k
        + p_0 \sum_{k=1}^{\infty} r_2(k) 7^{\left\lfloor \frac{7k+2}{4} \right\rfloor} t^k
        + p_1 \sum_{k=0}^{\infty} r_3(k) 7^{\left\lfloor \frac{7k+5}{4} \right\rfloor} t^k \right\}, \\
X_B &:= \left\{ \sum_{k=1}^{\infty} r_1(k) 7^{\left\lfloor \frac{7k-3}{4} \right\rfloor} t^k
        + p_0 \sum_{k=1}^{\infty} r_2(k) 7^{\left\lfloor \frac{7k-3}{4} \right\rfloor} t^k
        + 7 r_3(0) p_1  + p_1 \sum_{k=1}^{\infty} r_3(k) 7^{\left\lfloor \frac{7k}{4} \right\rfloor} t^k \right\},
\end{align*}
where each $r_j$ is a discrete function.

We aim to prove that for $\alpha > 0$:
\begin{equation}
\label{eq:7-L2a}
L_{2\alpha+1} \in 7^\alpha X_B,
\end{equation}
where for a set $X$ and a number $k$,
\[
k X := \{ k x : x \in X \}.
\]

\begin{proof}[Proof of Theorem \ref{thm-main}]
From Appendix \ref{funcr-7}, we observe that in each case there exists an integer $l$ and discrete functions $a_{k,u}^{(i)}(n)$ and $b_{k,u}^{(i)}(n)$ for $l \le k \le l + 6$ such that the following identities hold:
\begin{align}
\nonumber U_A(p_0t^k) &= \sum_{n\geq \lceil k/7 \rceil} a_{k,0}^{(0)}(n) 7^{\left\lfloor\frac{7n-k-2}{4}\right\rfloor} t^n + p_0 \sum_{n\geq \lceil (k+6)/7 \rceil} a_{k,0}^{(1)}(n) 7^{\left\lfloor\frac{7n-k-3}{4}\right\rfloor} t^n \\
\nonumber &\quad + p_1 \sum_{n\geq \lceil (k-1)/7 \rceil} a_{k,0}^{(2)}(n) 7^{\left\lfloor\frac{7n-k+1}{4}\right\rfloor} t^n, \\
\label{eq:UA7k-p1} U_A(p_1t^k) &= \sum_{n\geq \lceil (k+5)/7 \rceil} a_{k,1}^{(0)}(n) 7^{\left\lfloor\frac{7n-k-3}{4}\right\rfloor} t^n + p_0 \sum_{n\geq \lceil (k+5)/7 \rceil} a_{k,1}^{(1)}(n) 7^{\left\lfloor\frac{7n-k-3}{4}\right\rfloor} t^n \\
\nonumber &\quad + p_1 \sum_{n\geq \lceil (k-2)/7 \rceil} a_{k,1}^{(2)}(n) 7^{\left\lfloor\frac{7n-k}{4}\right\rfloor} t^n, \\
\nonumber U_A(t^k) &= \sum_{n\geq \lceil (k+4)/7 \rceil} a_{k,2}^{(0)}(n) 7^{\left\lfloor\frac{7n-k-3}{4}\right\rfloor} t^n + p_0 \sum_{n\geq \lceil (k+5)/7 \rceil} a_{k,2}^{(1)}(n) 7^{\left\lfloor\frac{7n-k-2}{4}\right\rfloor} t^n \\
\nonumber &\quad + p_1 \sum_{n\geq \lceil (k-2)/7 \rceil} a_{k,2}^{(2)}(n) 7^{\left\lfloor\frac{7n-k+1}{4}\right\rfloor} t^n, \\
\nonumber U_B(p_0t^k) &= \sum_{n\geq \lceil k/7 \rceil} b_{k,0}^{(0)}(n) 7^{\left\lfloor\frac{7n-k}{4}\right\rfloor} t^n + p_0 \sum_{n\geq \lceil (k+3)/7 \rceil} b_{k,0}^{(1)}(n) 7^{\left\lfloor\frac{7n-k-1}{4}\right\rfloor} t^n \\
\nonumber &\quad + p_1 \sum_{n\geq \lceil k/7 \rceil} b_{k,0}^{(2)}(n) 7^{\left\lfloor\frac{7n-k+3}{4}\right\rfloor} t^n, \\
\nonumber U_B(p_1t^k) &= \sum_{n\geq \lceil (k+1)/7 \rceil} b_{k,1}^{(0)}(n) 7^{\left\lfloor\frac{7n-k-1}{4}\right\rfloor} t^n + p_0 \sum_{n\geq \lceil (k+4)/7 \rceil} b_{k,1}^{(1)}(n) 7^{\left\lfloor\frac{7n-k-1}{4}\right\rfloor} t^n \\
\nonumber &\quad + p_1 \sum_{n\geq \lceil k/7 \rceil} b_{k,1}^{(2)}(n) 7^{\left\lfloor\frac{7n-k+2}{4}\right\rfloor} t^n, \\
\nonumber U_B(t^k) &= \sum_{n\geq \lceil k/7 \rceil} b_{k,2}^{(0)}(n) 7^{\left\lfloor\frac{7n-k-1}{4}\right\rfloor} t^n.
\end{align}

Using Lemma \ref{lem1} and Lemma \ref{lem2}, we conclude that the above six equations hold for all $k \in \mathbb{N}$.

We now prove \eqref{eq:7-L2a} by induction, establishing the following three claims:
\begin{align*}
&\text{(i) } L_1 \in X_B, \\
&\text{(ii) } g \in X_B \text{ implies } U_B(g) \in X_A, \\
&\text{(iii) } g \in X_A \text{ implies } U_A(g) \in 7 X_B.
\end{align*}

From \eqref{L-1}, we have $L_1 \in X_B$ for some discrete functions $r_i$. Now assume $g \in X_B$, so there exist discrete functions $r_i$ such that
\begin{align*}
g = \sum_{k=1}^{\infty} r_1(k) 7^{\left\lfloor\frac{7k-3}{4}\right\rfloor} t^k + p_0 \sum_{k=1}^{\infty} r_2(k) 7^{\left\lfloor\frac{7k-3}{4}\right\rfloor} t^k + 7 r_3(0) p_1 + p_1 \sum_{k=1}^{\infty} r_3(k) 7^{\left\lfloor\frac{7k}{4}\right\rfloor} t^k.
\end{align*}
Then
\begin{align}
\label{eq:7UBg}
U_B(g) &= \sum_{k=1}^{\infty} r_1(k) 7^{\left\lfloor\frac{7k-3}{4}\right\rfloor} U_B(t^k) + \sum_{k=1}^{\infty} r_2(k) 7^{\left\lfloor\frac{7k-3}{4}\right\rfloor} U_B(p_0 t^k)  \\
&~~~~~~~~~~~~~~~~~~~~~+7 r_3(0) U_B(p_1)+\sum_{k=1}^{\infty} r_3(k) 7^{\left\lfloor\frac{7k}{4}\right\rfloor} U_B(p_1 t^k)\nonumber.
\end{align}
Each sum in \eqref{eq:7UBg} can be expressed in the form $g_1$ for some $g_1 \in X_A$, confirming claim (ii).

Next, assume $g \in X_A$, so there exist discrete functions $r_i$ such that
\begin{align*}
g = \sum_{k=1}^{\infty} r_1(k) 7^{\left\lfloor\frac{7k+2}{4}\right\rfloor} t^k + p_0 \sum_{k=1}^{\infty} r_2(k) 7^{\left\lfloor\frac{7k+2}{4}\right\rfloor} t^k + p_1 \sum_{k=0}^{\infty} r_3(k) 7^{\left\lfloor\frac{7k+5}{4}\right\rfloor} t^k.
\end{align*}
Then
\begin{align}\label{eq:7UBg1}
U_A(g) = \sum_{k=1}^{\infty} r_1(k) 7^{\left\lfloor\frac{7k+2}{4}\right\rfloor} U_A(t^k) + \sum_{k=1}^{\infty} r_2(k) 7^{\left\lfloor\frac{7k+2}{4}\right\rfloor} U_A(p_0 t^k) + \sum_{k=0}^{\infty} r_3(k) 7^{\left\lfloor\frac{7k+5}{4}\right\rfloor} U_A(p_1 t^k).
\end{align}

As the proofs are similar, we focus on the third sum. From \eqref{eq:UA7k-p1}, we have
\begin{align*}
&\sum_{k=0}^{\infty} r_3(k) 7^{\left\lfloor\frac{7k+5}{4}\right\rfloor} U_A(p_1 t^k)
= \sum_{k=0}^{\infty} \sum_{n=1}^{\infty} r_3(k) a_{k,0}^{(0)}(n) 7^{\left\lfloor\frac{7k+5}{4}\right\rfloor + \left\lfloor\frac{7n-k-3}{4}\right\rfloor} t^n \\
&\quad + \sum_{k=0}^{\infty} \sum_{n=1}^{\infty} r_3(k) a_{k,0}^{(1)}(n) 7^{\left\lfloor\frac{7k+5}{4}\right\rfloor + \left\lfloor\frac{7n-k-3}{4}\right\rfloor} p_0 t^n
 + \sum_{k=0}^{\infty} \sum_{n=0}^{\infty} r_3(k) a_{k,0}^{(2)}(n) 7^{\left\lfloor\frac{7k+5}{4}\right\rfloor + \left\lfloor\frac{7n-k}{4}\right\rfloor} p_1 t^n.
\end{align*}
Note that $7 \mid U_A(p_1)$.

\begin{itemize}
    \item For $k=0$:
    \[
    \left\lfloor \frac{7 \cdot 0 + 5}{4} \right\rfloor + 1 = \left\lfloor \frac{5}{4} \right\rfloor + 1 = 1 + 1 = 2 \quad \Rightarrow \quad \left\lfloor \frac{7k+5}{4} \right\rfloor + 1 \geq 2.
    \]

    \item For $k \in \{1, 2\}$ and $n \geq 0$:
    \[
    \left\lfloor \frac{7k+5}{4} \right\rfloor + \left\lfloor \frac{7n - k}{4} \right\rfloor \geq 2.
    \]

    \item For $k \geq 0$:
    \begin{align*}
        \left\lfloor \frac{7k+5}{4} \right\rfloor + \left\lfloor \frac{7n - k - 3}{4} \right\rfloor &\geq 1 + \left\lfloor \frac{7n - 3}{4} \right\rfloor \qquad \text{for  $n \geq 1$},\\
        \left\lfloor \frac{7k+5}{4} \right\rfloor + \left\lfloor \frac{7n - k}{4} \right\rfloor &\geq 1 + \left\lfloor \frac{7n}{4} \right\rfloor\qquad \text{for  $n \geq 0$}.
    \end{align*}
\end{itemize}

Hence, the right-hand side of \eqref{eq:7UBg1} can be written as $7g_1$ for some $g_1 \in X_B$, proving claim (iii). The proof that $g \in X_A$ implies $U_A(g) \in 7X_B$ follows similarly.

\end{proof}

%


\appendix
\section{The Fundamental Relations for the Reciprocal of Crank Parity Function for Powers of $7$}
\label{funcr-7}

\begin{align*}
&\text{Group \uppercase\expandafter{\romannumeral1}}\\
&U_A(p_0) = p_0(-7t + 8 \cdot 7^2 t^2) \\
&\quad + p_1(1 + 7^2 t + 8 \cdot 7^3 t^2 + 7^5 t^3) + (8 \cdot 7 t + 8 \cdot 7^3 t^2 + 8 \cdot 7^4 t^3),\\
&U_A(p_0 t^{-1}) = p_0(8 \cdot 7 t + 7^3 t^2) + p_1(-7 - 7^2 t),\\
&U_A(p_0 t^{-2}) = p_0(-34 \cdot 7 t + 7^5 t^3) + p_1(30 + 7^2 t - 7^4 t^2) + (-8 \cdot 7 t - 4 \cdot 7^3 t^2),\\
&U_A(p_0 t^{-3}) = p_0(80 \cdot 7 t - 88 \cdot 7^3 t^2 - 24 \cdot 7^5 t^3 - 7^7 t^4) \\
&\quad + p_1(-10 \cdot 7 + 72 \cdot 7^2 t + 23 \cdot 7^4 t^2 + 7^6 t^3) + (8 \cdot 7^2 t + 32 \cdot 7^3 t^2),\\
&U_A(p_0 t^{-4}) = p_0(64 \cdot 7^2 t + 1240 \cdot 7^3 t^2 + 48 \cdot 7^6 t^3 + 4 \cdot 7^8 t^4 + 7^9 t^5) \\
&\quad + p_1(-58 \cdot 7 - 152 \cdot 7^3 t - 314 \cdot 7^4 t^2 - 27 \cdot 7^6 t^3 - 7^8 t^4) \\
&\quad + (2 - 52 \cdot 7^2 t - 38 \cdot 7^4 t^2 - 20 \cdot 7^5 t^3 - 2 \cdot 7^7 t^4),\\
&U_A(p_0 t^{-5}) = p_0(-1264 \cdot 7^2 t - 1726 \cdot 7^4 t^2 - 512 \cdot 7^6 t^3 - 64 \cdot 7^8 t^4 - 32 \cdot 7^9 t^5 - 7^{11} t^6) \\
&\quad + p_1(1132 \cdot 7 + 214 \cdot 7^4 t + 468 \cdot 7^5 t^2 + 60 \cdot 7^7 t^3 + 31 \cdot 7^8 t^4 + 7^{10} t^5) \\
&\quad + (-32 + 340 \cdot 7^2 t + 360 \cdot 7^4 t^2 + 12 \cdot 7^7 t^3 + 88 \cdot 7^7 t^4 + 4 \cdot 7^9 t^5),\\
&U_A(p_0 t^{-6}) = p_0(8 + 12732 \cdot 7^2 t + 13992 \cdot 7^4 t^2 + 4532 \cdot 7^6 t^3 + 4752 \cdot 7^7 t^4 + 340 \cdot 7^9 t^5 - 7^{13} t^7) \\
&\quad + p_1(-t^{-1} - 11380 \cdot 7 - 12191 \cdot 7^3 t - 4080 \cdot 7^5 t^2 - 633 \cdot 7^7 t^3 - 48 \cdot 7^9 t^4 - 7^{10} t^5 + 7^{12} t^6) \\
&\quad + (48 \cdot 7 - 312 \cdot 7^3 t - 3180 \cdot 7^4 t^2 - 1056 \cdot 7^6 t^3 - 20 \cdot 7^9 t^4 - 8 \cdot 7^9 t^5 + 4 \cdot 7^{11} t^6),\\[2ex]
&\text{Group \uppercase\expandafter{\romannumeral2}}\\
&U_A(p_1) = p_0(6280 \cdot 7^8 t^5 + 13984 \cdot 7^9 t^6 + 344 \cdot 7^{12} t^7 + 216 \cdot 7^{13} t^8 + 8 \cdot 7^{15} t^9 + 9992 \cdot 7^6 t^4 \\
&\quad + 6472 \cdot 7^4 t^3 - 8 \cdot 7 t + 800 \cdot 7^2 t^2) \\
&\quad + p_1(7 + 13712 \cdot 7^5 t^3 + 16913 \cdot 7^7 t^4 + 184 \cdot 7 t + 3746 \cdot 7^3 t^2 + 70540 \cdot 7^8 t^5 \\
&\quad + 23568 \cdot 7^{10} t^6 + 4684 \cdot 7^{12} t^7 + 79 \cdot 7^{15} t^8 + 7^{18} t^{10} + 36 \cdot 7^{16} t^9) \\
&\quad + (88 \cdot 7 t + 26032 \cdot 7^2 t^2 + 96464 \cdot 7^4 t^3 + 120144 \cdot 7^6 t^4 + 72504 \cdot 7^8 t^5 \\
&\quad + 172216 \cdot 7^9 t^6 + 34848 \cdot 7^{11} t^7 + 600 \cdot 7^{14} t^8 + 40 \cdot 7^{16} t^9 + 8 \cdot 7^{17} t^{10}),\\
&U_A(p_1 t^{-1}) = p_0(-8 \cdot 7 t + 8 \cdot 7^3 t^2) + p_1(8 + 7^3 t + 8 \cdot 7^4 t^2 + 7^6 t^3) + (8 \cdot 7^2 t + 8 \cdot 7^4 t^2 + 8 \cdot 7^5 t^3),\\
&U_A(p_1 t^{-2}) = p_0(64 \cdot 7 t + 8 \cdot 7^3 t^2) + p_1(-57 - 8 \cdot 7^2 t),\\
&U_A(p_1 t^{-3}) = p_0(-320 \cdot 7 t - 8 \cdot 7^3 t^2 + 8 \cdot 7^5 t^3) + p_1(288 + 17 \cdot 7^2 t - 8 \cdot 7^4 t^2) \\
&\quad + (-48 \cdot 7 t - 32 \cdot 7^3 t^2),\\
&U_A(p_1 t^{-4}) = p_0(152 \cdot 7^2 t - 96 \cdot 7^4 t^2 - 200 \cdot 7^5 t^3 - 8 \cdot 7^7 t^4) \\
&\quad + p_1(-138 \cdot 7 + 528 \cdot 7^2 t + 193 \cdot 7^4 t^2 + 8 \cdot 7^6 t^3) + (48 \cdot 7^2 t + 288 \cdot 7^3 t^2),\\
&U_A(p_1 t^{-5}) = p_0(-8 + 176 \cdot 7^2 t + 1496 \cdot 7^4 t^2 + 416 \cdot 7^6 t^3 + 232 \cdot 7^7 t^4 + 8 \cdot 7^9 t^5) \\
&\quad + p_1(t^{-1} - 144 \cdot 7 - 1265 \cdot 7^3 t - 8 \cdot 7^7 t^2 - 32 \cdot 7^7 t^3 - 8 \cdot 7^8 t^4) \\
&\quad + (16 - 344 \cdot 7^2 t - 376 \cdot 7^4 t^2 - 24 \cdot 7^6 t^3 - 16 \cdot 7^7 t^4),\\
&U_A(p_1 t^{-6}) = p_0(160 - 9216 \cdot 7^2 t - 15632 \cdot 7^4 t^2 - 4640 \cdot 7^6 t^3 - 3856 \cdot 7^7 t^4 - 264 \cdot 7^9 t^5 - 8 \cdot 7^{11} t^6) \\
&\quad + p_1(-20 t^{-1} + 1166 \cdot 7^2 + 13460 \cdot 7^3 t + 4274 \cdot 7^5 t^2 + 516 \cdot 7^7 t^3 + 255 \cdot 7^8 t^4 + 8 \cdot 7^{10} t^5) \\
&\quad + (-264 + 2640 \cdot 7^2 t + 3672 \cdot 7^4 t^2 + 704 \cdot 7^6 t^3 + 712 \cdot 7^7 t^4 + 32 \cdot 7^9 t^5),\\[2ex]
&\text{Group \uppercase\expandafter{\romannumeral3}}\\
&U_A(1) = p_0(8 \cdot 7 t + 12 \cdot 7^3 t^2 + 24 \cdot 7^4 t^3 + 2 \cdot 7^6 t^4) \\
&\quad + p_1(-7 - 11 \cdot 7^2 t - 23 \cdot 7^3 t^2 - 2 \cdot 7^5 t^3) + (2 \cdot 7 t + 29 \cdot 7^2 t^2 + 10 \cdot 7^4 t^3 + 7^6 t^4),\\
&U_A(t^{-1}) = p_0(2 \cdot 7 t) + p_1(-2) + 7 t,\\
&U_A(t^{-2}) = p_0(-4 \cdot 7^2 t - 4 \cdot 7^3 t^2) + p_1(26 + 4 \cdot 7^2 t) + (-4 \cdot 7 t - 7^3 t^2),\\
&U_A(t^{-3}) = p_0(26 \cdot 7^2 t + 8 \cdot 7^3 t^2 - 4 \cdot 7^5 t^3) + p_1(-24 \cdot 7 - 12 \cdot 7^2 t + 4 \cdot 7^4 t^2) \\
&\quad + (10 \cdot 7 t - 83 \cdot 7^3 t^2 + 10 \cdot 7^5 t^3),\\
&U_A(t^{-4}) = p_0(-124 \cdot 7^2 t + 32 \cdot 7^4 t^2 + 80 \cdot 7^5 t^3 + 2 \cdot 7^7 t^4) \\
&\quad + p_1(115 \cdot 7 - 23 \cdot 7^3 t - 79 \cdot 7^4 t^2 - 2 \cdot 7^6 t^3) + (1 - 2 \cdot 7^2 t - 83 \cdot 7^3 t^2 + 10 \cdot 7^5 t^3),\\
&U_A(t^{-5}) = p_0(8 + 300 \cdot 7^2 t - 552 \cdot 7^4 t^2 - 20 \cdot 7^7 t^3 - 36 \cdot 7^7 t^4) \\
&\quad + p_1(-t^{-1} - 290 \cdot 7 + 453 \cdot 7^3 t + 136 \cdot 7^5 t^2 + 5 \cdot 7^7 t^3) \\
&\quad + (-18 + 18 \cdot 7^2 t + 150 \cdot 7^4 t^2 + 6 \cdot 7^6 t^3 + 18 \cdot 7^7 t^4 + 7^9 t^5),\\
&U_A(t^{-6}) = p_0(-24 \cdot 7 + 2300 \cdot 7^2 t + 6368 \cdot 7^4 t^2 + 1648 \cdot 7^6 t^3 + 146 \cdot 7^8 t^4 + 80 \cdot 7^9 t^5 + 4 \cdot 7^{11} t^6) \\
&\quad + p_1(3 \cdot 7 t^{-1} - 1949 \cdot 7 - 5406 \cdot 7^3 t - 1551 \cdot 7^5 t^2 - 135 \cdot 7^7 t^3 - 76 \cdot 7^8 t^4 - 4 \cdot 7^{10} t^5) \\
&\quad + (211 - 442 \cdot 7^2 t - 1713 \cdot 7^4 t^2 - 270 \cdot 7^6 t^3 - 323 \cdot 7^7 t^4 - 8 \cdot 7^9 t^5 + 7^{11} t^6),\\[2ex]
&\text{Group \uppercase\expandafter{\romannumeral4}}\\
&U_B(p_0) = p_0(79 \cdot 7 t + 216 \cdot 7^3 t^2 + 80 \cdot 7^5 t^3 + 8 \cdot 7^7 t^4) \\
&\quad + p_1(4 + 34 \cdot 7^3 t + 327 \cdot 7^4 t^2 + 18 \cdot 7^7 t^3 + 19 \cdot 7^8 t^4 + 7^{10} t^5) \\
&\quad + (4 + 1672 \cdot 7 t + 2320 \cdot 7^3 t^2 + 920 \cdot 7^5 t^3 + 144 \cdot 7^7 t^4 + 8 \cdot 7^9 t^5),\\
&U_B(p_0 t^{-1}) = p_0(-8 \cdot 7 t - 7^3 t^2) + p_1(7 + 7^2 t) + 8,\\
&U_B(p_0 t^{-2}) = p_0(-8 \cdot 7^3 t^2 - 7^5 t^3) + p_1(7^3 t + 7^4 t^2) - 12,\\
&U_B(p_0 t^{-3}) = p_0(1 + 10 \cdot 7^2 t + 27 \cdot 7^4 t^2 + 62 \cdot 7^5 t^3 + 6 \cdot 7^7 t^4) \\
&\quad + p_1(-4 \cdot 7 - 3 \cdot 7^4 t - 8 \cdot 7^5 t^2 - 6 \cdot 7^6 t^3) + (80 - 64 \cdot 7^2 t - 32 \cdot 7^4 t^2 - 4 \cdot 7^6 t^3),\\
&U_B(p_0 t^{-4}) = p_0(-6 - 60 \cdot 7^2 t - 162 \cdot 7^4 t^2 - 60 \cdot 7^6 t^3 - 50 \cdot 7^7 t^4 - 7^9 t^5) \\
&\quad + p_1(24 \cdot 7 + 18 \cdot 7^4 t + 54 \cdot 7^5 t^2 + 7^8 t^3 + 7^8 t^4) \\
&\quad + (-96 \cdot 7 + 384 \cdot 7^2 t + 192 \cdot 7^4 t^2 + 24 \cdot 7^6 t^3),\\
&U_B(p_0 t^{-5}) = p_0(3 \cdot 7 + 30 \cdot 7^3 t + 81 \cdot 7^5 t^2 + 30 \cdot 7^7 t^3 + 3 \cdot 7^9 t^4 - 8 \cdot 7^9 t^5 - 7^{11} t^6) \\
&\quad + p_1(-12 \cdot 7^2 - 9 \cdot 7^5 t - 27 \cdot 7^6 t^2 - 3 \cdot 7^8 t^3 + 7^9 t^4 + 7^{10} t^5) \\
&\quad + (752 \cdot 7 - 192 \cdot 7^3 t - 96 \cdot 7^5 t^2 - 12 \cdot 7^7 t^3),\\
&U_B(p_0 t^{-6}) = p_0(2 \cdot 7 + 20 \cdot 7^3 t + 54 \cdot 7^5 t^2 + 20 \cdot 7^7 t^3 + 2 \cdot 7^9 t^4 - 8 \cdot 7^{11} t^6 - 7^{13} t^7) \\
&\quad + p_1(-8 \cdot 7^2 - 6 \cdot 7^5 t - 18 \cdot 7^6 t^2 - 2 \cdot 7^8 t^3 + 7^{11} t^5 + 7^{12} t^6) \\
&\quad + (-108 \cdot 7^3 - 128 \cdot 7^3 t - 64 \cdot 7^5 t^2 - 8 \cdot 7^7 t^3),\\[2ex]
&\text{Group \uppercase\expandafter{\romannumeral5}}\\
&U_B(p_1 t^{-1}) = p_0(552 \cdot 7 t + 216 \cdot 7^4 t^2 + 80 \cdot 7^6 t^3 + 8 \cdot 7^8 t^4) \\
&\quad + p_1(29 + 34 \cdot 7^4 t + 327 \cdot 7^5 t^2 + 18 \cdot 7^8 t^3 + 19 \cdot 7^9 t^4 + 7^{11} t^5) \\
&\quad + (32 + 1672 \cdot 7^2 t + 2320 \cdot 7^4 t^2 + 920 \cdot 7^6 t^3 + 144 \cdot 7^8 t^4 + 8 \cdot 7^{10} t^5),\\
&U_B(p_1 t^{-2}) = p_0(-8 \cdot 7^2 t - 8 \cdot 7^3 t^2) + p_1(7^2 + 8 \cdot 7^2 t) + 40,\\
&U_B(p_1 t^{-3}) = p_0(-8 \cdot 7^4 t^2 - 8 \cdot 7^5 t^3) + p_1(7^4 t + 8 \cdot 7^4 t^2) - 8,\\
&U_B(p_1 t^{-4}) = p_0(8 + 80 \cdot 7^2 t + 216 \cdot 7^4 t^2 + 72 \cdot 7^6 t^3 + 48 \cdot 7^7 t^4) \\
&\quad + p_1(-32 \cdot 7 - 24 \cdot 7^4 t - 65 \cdot 7^5 t^2 - 48 \cdot 7^6 t^3) \\
&\quad + (256 - 512 \cdot 7^2 t - 256 \cdot 7^4 t^2 - 32 \cdot 7^6 t^3),\\
&U_B(p_1 t^{-5}) = p_0(-8 \cdot 7 - 80 \cdot 7^3 t - 216 \cdot 7^5 t^2 - 80 \cdot 7^7 t^3 - 64 \cdot 7^8 t^4 - 8 \cdot 7^9 t^5) \\
&\quad + p_1(32 \cdot 7^2 + 24 \cdot 7^5 t + 72 \cdot 7^6 t^2 + 9 \cdot 7^8 t^3 + 8 \cdot 7^8 t^4) \\
&\quad + (-584 \cdot 7 + 512 \cdot 7^3 t + 256 \cdot 7^5 t^2 + 32 \cdot 7^7 t^3),\\
&U_B(p_1 t^{-6}) = p_0(40 \cdot 7 + 400 \cdot 7^3 t + 1080 \cdot 7^5 t^2 + 400 \cdot 7^7 t^3 + 40 \cdot 7^9 t^4 - 8 \cdot 7^{10} t^5 - 8 \cdot 7^{11} t^6) \\
&\quad + p_1(-160 \cdot 7^2 - 120 \cdot 7^5 t - 360 \cdot 7^6 t^2 - 40 \cdot 7^8 t^3 + 7^{10} t^4 + 8 \cdot 7^{10} t^5) \\
&\quad + (856 \cdot 7^2 - 2560 \cdot 7^3 t - 1280 \cdot 7^5 t^2 - 160 \cdot 7^7 t^3),\\
&U_B(p_1 t^{-7}) = p_0(-136 \cdot 7 - 1360 \cdot 7^3 t - 3672 \cdot 7^5 t^2 - 1360 \cdot 7^7 t^3 - 136 \cdot 7^9 t^4 - 8 \cdot 7^{12} t^6 - 8 \cdot 7^{13} t^7) \\
&\quad + p_1(t^{-1} + 11 \cdot 7^4 + 2798 \cdot 7^4 t + 1167 \cdot 7^6 t^2 + 818 \cdot 7^7 t^3 - 19 \cdot 7^9 t^4 + 6 \cdot 7^{11} t^5 + 8 \cdot 7^{12} t^6) \\
&\quad + (-49632 \cdot 7 + 8392 \cdot 7^3 t + 3984 \cdot 7^5 t^2 + 408 \cdot 7^7 t^3 - 144 \cdot 7^8 t^4 - 8 \cdot 7^{10} t^5),\\[2ex]
&\text{Group \uppercase\expandafter{\romannumeral6}}\\
&U_B(1) = 1,\\
&U_B(t^{-1}) = -4 - 7 t,\\
&U_B(t^{-2}) = 20 - 7^3 t^2,\\
&U_B(t^{-3}) = -88 - 7^5 t^3,\\
&U_B(t^{-4}) = 260 - 7^7 t^4,\\
&U_B(t^{-5}) = 68 \cdot 7 - 7^9 t^5,\\
&U_B(t^{-6}) = -2392 \cdot 7 - 7^{11} t^6.
\end{align*}

\subsection*{Acknowledgements}
The  author was  supported by the National Key R\&D Program of China (Grant No. 2024YFA1014500) and the National Natural Science Foundation of China (Grant No. 12201387).




\end{document}